\documentclass[12pt]{amsart}%
\usepackage{amsfonts}
\usepackage{mathtools}
\usepackage{amsmath}
\usepackage{amssymb}
\usepackage{amsthm}
\usepackage{newlfont}
\usepackage{graphicx}
\usepackage{amssymb,amsmath,xcolor,graphicx,xspace,colortbl, rotating}
\usepackage{enumitem}
\usepackage{textcomp}%
\setcounter{MaxMatrixCols}{30}
\providecommand{\U}[1]{\protect\rule{.1in}{.1in}}
\graphicspath{{proof_latest teX_graphics/}{proof_latest teX_tcache/}{proof_latest teX_gcache/}}
\DeclareGraphicsExtensions{.pdf,.eps,.ps,.png,.jpg,.jpeg}
\newtheorem {theorem}{Theorem}[section]

\newtheorem {claim}[theorem]{Claim}

\newtheorem {lemma}[theorem]{Lemma}

\numberwithin{equation}{section}
\begin{document}
\title[Hamiltonian Stationary]{Interior Schauder Estimates for the Fourth Order Hamiltonian Stationary Equation in two dimensions}
\author{Arunima Bhattacharya AND Micah Warren }

\begin{abstract}
We consider the Hamiltonian stationary equation for all phases in dimension
two. We show that solutions that are $C^{1,1}$ will be smooth and we also
derive a $C^{2,\alpha}$ estimate for it.

\end{abstract}
\maketitle

\section{ Introduction}

In this paper, we study the regularity of the Lagrangian Hamiltonian
stationary equation, which is a fourth order nonlinear PDE. Consider the
function $u:B_{1}\rightarrow\mathbb{R}$ where $B_{1}$ is the unit ball in
$\mathbb{R}^{2}$. The gradient graph of $u$, given by $\{(x,Du(x))|x\in
B_{1}\}$ is a Lagrangian submanifold of the complex Euclidean space. The
function $\theta$ is called the Lagrangian phase for the gradient graph~and is
defined by
\[
\theta=F(D^{2}u)=Im\log\det(I+iD^{2}u)
\]
or equivalently,
\begin{equation}
\theta=\sum_{i}\arctan(\lambda_{i}) \label{P}%
\end{equation}
where $\lambda_{i}$ represents the eigenvalues of the Hessian.

The nonhomogenous special Lagrangian equation is given by the following second
order nonlinear equation
\begin{equation}
F(D^{2}u)=f(x). \label{SL}%
\end{equation}

The Hamiltonian stationary equation is given by the following fourth order
nonlinear PDE%

\begin{equation}
\Delta_{g}\theta=0\label{HS}%
\end{equation}
where $\Delta_{g}$ is the Laplace-Beltrami operator, given by:
\[
\Delta_{g}=\sum_{i,j=1}^{2}\frac{\partial_{i}(\sqrt{detg}g^{ij}\partial_{j}%
)}{\sqrt{detg}}%
\]
and $g$ is the induced Riemannian metric from the Euclidean metric on
$\mathbb{R}^{4}$, which can be written as
\[
g=I+(D^{2}u)^{2}.
\]

Recently, Chen and Warren \cite{CW} proved that in any dimension, a
$C^{1,1\text{ }}$solution of the Hamiltonian stationary equation will be
smooth with uniform estimates of all orders if the phase $\theta\geq
\delta+(n-2)\pi/2,$ or, if the bound on the Hessian is small. In the two
dimensional case, using \cite{CW}'s result, we get uniform estimates for $u$
when $\left\vert \theta\right\vert \geq\delta>0$ (by symmetry). In this paper,
we consider the Hamiltonian stationary equation for all phases in dimension
two without imposing a smallness condition on the Hessian or on the range of
$\theta$, and we derive uniform estimates for $u$, in terms of the $C^{1,1}$
bound which we denote by $\Lambda$. We write $||u||_{C^{1,1}(B_{1})}= ||Du||_{C^{0,1}(B_{1})}=\Lambda$. Our main results are the following:

\begin{theorem}
Suppose that $u\in C^{1,1}(B_{1})\cap W^{2,2}(B_{1})$ and satisfies (\ref{HS})
on $B_{1}\subset\mathbb{R}^{2}$. Then u is a smooth function with interior
H\"{o}lder estimates of all orders, based on the $C^{1,1}$ bound of $u$.
\end{theorem}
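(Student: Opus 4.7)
The plan is to recast the fourth-order Hamiltonian stationary equation as a system of two coupled second-order problems---the linear divergence-form equation $\Delta_g \theta = 0$ governing the phase $\theta$ (with coefficients determined by $u$), and the nonhomogeneous special Lagrangian equation $F(D^2 u) = \theta(x)$ governing the potential $u$---and to run a regularity bootstrap between them. The roadmap is: (i) De Giorgi-Nash-Moser on the first equation to promote $\theta$ from $L^\infty$ to $C^\alpha$; (ii) a Schauder-type estimate for (\ref{SL}) with H\"older right-hand side to promote $u$ to $C^{2,\alpha}$; (iii) iterated linear Schauder on the coupled system to reach $C^\infty$.

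For step (i), the $C^{1,1}$ bound on $u$ forces the induced metric $g = I + (D^2 u)^2$ to have measurable entries with eigenvalues in $[1, 1+\Lambda^2]$. Thus $\Delta_g$, written in divergence form, is uniformly elliptic with bounded measurable coefficients, and $\theta = F(D^2 u)$ is a priori in $L^\infty$ with $|\theta| \leq \pi$. Using $u \in W^{2,2}$ together with a mollification of $u$, I would interpret $\theta$ as a weak $W^{1,2}_{\text{loc}}$ solution of $\Delta_g \theta = 0$ (via a Caccioppoli bound in the limit). De Giorgi-Nash-Moser then yields $\theta \in C^\alpha_{\text{loc}}(B_1)$ with estimate depending only on $\Lambda$.

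Step (ii) is the main obstacle. In two dimensions the operator $F(D^2 u) = \arctan\lambda_1 + \arctan\lambda_2$ is concave on $\{\theta \geq 0\}$ and convex on $\{\theta \leq 0\}$, so without restriction on the range of $\theta$ neither Evans-Krylov nor Caffarelli's interior $C^{2,\alpha}$ theory applies directly. I would overcome this with a Lewy rotation: multiplying the Lagrangian graph $\{(x, Du(x))\} \subset \mathbb{C}^2$ by a unimodular constant $e^{i\alpha}$ produces another Lagrangian graph whose phase is shifted by $2\alpha \pmod{2\pi}$. Working on a small ball where the oscillation of $\theta$ is small (using the $C^\alpha$ bound from step (i)), I can pick $\alpha$ so that the shifted phase lies strictly in $[\delta, \pi - \delta]$; the rotated graph is then the gradient graph of a new $C^{1,1}$ function $\tilde u$ satisfying a concave special Lagrangian equation with H\"older right-hand side, where interior $C^{2,\alpha}$ theory does apply. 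Inverting the rotation and patching via a covering argument yields $u \in C^{2,\alpha}$. The condition that the rotation remain graphical amounts to the Jacobian $\cos\alpha\, I - \sin\alpha\, D^2 u$ being uniformly nondegenerate, which is quantitatively controlled by $\Lambda$. An alternative is to exploit the two-dimensional identity $\cos\theta\,\Delta u + \sin\theta(\det D^2 u - 1) = 0$, viewing it as a mixed Poisson/Monge-Amp\`ere equation with H\"older coefficients.

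Step (iii) is a standard bootstrap. With $u \in C^{2,\alpha}$, the coefficients $g^{ij}$ of the first equation lie in $C^\alpha$, so divergence-form Schauder gives $\theta \in C^{2,\alpha}$. Differentiating (\ref{SL}) yields the linear uniformly elliptic equation $g^{ij}\partial_{ijk}u = \partial_k\theta$ with $C^\alpha$ coefficients and $C^{1,\alpha}$ right-hand side; linear Schauder promotes $u$ to $C^{3,\alpha}$, then to $C^{4,\alpha}$, and inductively to all higher orders. The whole argument therefore reduces to step (ii), and the hard part will be producing the $C^{2,\alpha}$ estimate for the special Lagrangian equation uniformly in the phase.
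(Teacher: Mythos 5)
Your proposal follows essentially the same roadmap as the paper: De Giorgi--Nash--Moser applied to $\Delta_g\theta = 0$ to get $\theta \in C^\alpha$, then a Lewy/Yuan rotation of the gradient graph (on a small ball where $\mathrm{osc}\,\theta$ is small) to push the phase into a range where the special Lagrangian operator is concave and Caffarelli--Cabr\'e interior $C^{2,\alpha}$ theory applies, and finally a bootstrap to $C^\infty$. The only cosmetic difference is that the paper cites Chen--Warren's Corollary 5.1 for the final smoothing step rather than writing out the linear Schauder iteration, and the paper splits the phase into two explicit cases ($\theta(x) < \delta/2$, handled by the rotation lemma, versus $\theta(x) > \delta/4$, handled directly) rather than a single covering argument; your alternative via the identity $\cos\theta\,\Delta u + \sin\theta(\det D^2 u - 1) = 0$ is not used in the paper.
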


\begin{theorem}
Suppose that $u\in C^{1,1}(B_{1})\cap W^{2,2}(B_{1})$ and satisfies (\ref{SL})
on $B_{1}\subset\mathbb{R}^{2}$. If $\theta\in C^{\alpha}(B_{1})$, then there
exists $R=R(2,\Lambda,\alpha)<1$ such that $u\in C^{2,\alpha}(B_{R})$ and
satisfies the following estimate
\begin{equation}
|D^{2}u|_{C^{\alpha}(B_{R})}\leq C_{1}(||u||_{L^{\infty}(B_{1})}, \Lambda, |\theta|_{C^{\alpha}(B_{1})}).\label{E}%
\end{equation}

\end{theorem}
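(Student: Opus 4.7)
The plan is to reduce, locally, to Caffarelli--Cabr\'e's interior $C^{2,\alpha}$ estimate for fully nonlinear uniformly elliptic equations $F(D^{2}u) = f(x)$ that are concave in the Hessian variable and whose right-hand side lies in $C^{\alpha}$. In dimension two the critical phase of the Lagrangian operator $F(M) = \sum\arctan\lambda_{i}(M)$ is $(n-2)\pi/2 = 0$, so $F$ is concave on $\{\theta \geq 0\}$ and, by the symmetry $u \mapsto -u$, the Schauder machinery applies whenever $|\theta|$ is bounded away from $0$. The $C^{1,1}$ bound gives $|\lambda_{i}| \leq \Lambda$ and hence $|\theta| \leq 2\arctan\Lambda < \pi$; the linearization $F_{M_{ij}} = g^{ij}$ has spectrum in $[(1+\Lambda^{2})^{-1},1]$, producing uniform ellipticity with constants depending only on $\Lambda$.

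Fix $x_{0} \in B_{1/2}$ and choose a threshold $\delta_{0} = \delta_{0}(\Lambda,\alpha,|\theta|_{C^{\alpha}}) > 0$ with $\delta_{0}$ well below $\arctan(1/\Lambda)/3$. If $|\theta(x_{0})| \geq 2\delta_{0}$, the $C^{\alpha}$-continuity of $\theta$ forces $|\theta| \geq \delta_{0}$ on a concentric ball of radius $r_{0} \sim (\delta_{0}/|\theta|_{C^{\alpha}})^{1/\alpha}$; on that ball $\pm F$ is concave, and Caffarelli--Cabr\'e's theorem produces $D^{2}u \in C^{\alpha}(B_{r_{0}/2}(x_{0}))$ with the bound in (\ref{E}). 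Otherwise $|\theta(x_{0})| < 2\delta_{0}$, and we apply a scalar Lewy rotation (a $U(2)$-rotation of the Lagrangian graph) of angle $\phi = 2\delta_{0} - \theta(x_{0})/2$. Because $|\phi| \leq 3\delta_{0} < \arctan(1/\Lambda)$ and $|\lambda_{i}| \leq \Lambda$, the Jacobian $\cos\phi \cdot I - \sin\phi \cdot D^{2}u$ is uniformly invertible, so the rotated Lagrangian is again a gradient graph $(\bar x, D\bar u(\bar x))$ over a $C^{1,1}$-diffeomorphic image of $B_{1}$. The rotated Hessian eigenvalues are $\bar\lambda_{i} = \tan(\arctan\lambda_{i} + \phi)$ --- bounded in terms of $\Lambda$ --- and the new phase $\bar\theta = \theta + 2\phi$ satisfies $\bar\theta(\bar x_{0}) = 4\delta_{0}$ with $C^{\alpha}$-norm controlled by that of $\theta$ and by $\Lambda$. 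The first case now applies to the rotated problem, and pulling back through the $C^{1,1}$ coordinate change yields the bound for $u$ near $x_{0}$.

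A finite-covering and chaining argument for these local estimates over $B_{R}$, with $R = R(2,\Lambda,\alpha)$, produces the global bound (\ref{E}). The main obstacle is that the admissible band of Lewy rotation angles, $(-\arctan(1/\Lambda),\arctan(1/\Lambda))$, shrinks as $\Lambda \to \infty$; this is what forces $\delta_{0}$, hence the local radii, and ultimately $R$, to depend on $\Lambda$. The key technical task is to propagate the Caffarelli--Cabr\'e constants, the distortion caused by the Lewy rotation, and the covering constants so that the final bound depends only on $\|u\|_{L^{\infty}(B_{1})}$, $\Lambda$, and $|\theta|_{C^{\alpha}(B_{1})}$, as asserted in (\ref{E}).
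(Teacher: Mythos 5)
Your proposal follows essentially the same strategy as the paper: dichotomize at a threshold $\delta_{0}$ on $|\theta|$, apply Caffarelli--Cabr\'e's concave Schauder theorem directly when $|\theta|$ is bounded away from the critical phase $0$, and otherwise perform a Lewy/Yuan rotation of the Lagrangian graph to push the phase into the concavity regime, invoke Caffarelli--Cabr\'e on the rotated potential, and pull back. The one place you wave your hands --- ``pulling back through the $C^{1,1}$ coordinate change yields the bound for $u$'' --- is exactly the content of the paper's Lemma 2.1, where the algebraic identity $D_{x}^{2}u(x)-D_{x}^{2}u(0)=[cI-sB]^{-1}[A-B][cI-sA]^{-1}$ is used to transfer the H\"older seminorm of $D^{2}\bar u$ to that of $D^{2}u$ with a factor $L_{1}^{2+\alpha}$, and $|\bar\theta|_{C^{\alpha}}$ and $\|\bar u\|_{L^{\infty}}$ are likewise controlled via the Lipschitz constants of the rotation; you correctly identify this as the ``key technical task'' but should carry it out, since it is the nontrivial core of the proof rather than routine bookkeeping.
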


Our proof goes as follows: we start by applying the De Giorgi-Nash theorem to
the uniformly elliptic Hamiltonian stationary equation (\ref{HS}) on $B_{1}$
to prove that $\theta\in C^{\alpha}(B_{1/2})$. Next we consider the
non-homogenous special Lagrangian equation (\ref{SL}) where~$\theta\in
C^{\alpha}(B_{1/2}).$ Using a rotation of Yuan \cite{Yuan2002} we rotate the
gradient graph so that the new phase $\bar{\theta}$ of the rotated gradient
graph satisfies $\left\vert \bar{\theta}\right\vert \geq\delta>0$. Now we apply \cite{CC} to the new
potential $\bar{u}$ of the rotated graph to obtain a $C^{2,\alpha}$ interior
estimate for it. On rotating back the rotated gradient graph to our original
gradient graph, we see that our potential $u$ turns out to be $C^{2,\alpha}$
as well. A computation involving change of co-ordinates gives us the
corresponding $C^{2,\alpha}$ estimate, shown in (\ref{E}). Once we have a
$C^{2,\alpha}$ solution of (\ref{HS}), smoothness follows by \cite[Corollary
5.1]{CW}.

In two dimensions, solutions to the second order special Lagrangian equation
\[
F(D^{2}u)=C
\]
enjoy full regularity estimates in terms of the potential $u$
\cite{WarrenYuan2}. \ For higher dimensions, such estimates fail
\cite{WangYuan} for $\theta=C$ with $\left\vert C\right\vert <(n-2)\pi/2$.

\section{ Proof of theorems:}

We first prove Theorem 1.2, followed by the proof of Theorem 1.1. We prove
Theorem 1.2 using the following lemma.

\begin{lemma}
Suppose that $u\in C^{1,1}(B_{1})\cap W^{2,2}(B_{1})$ satisfies (\ref{SL}) on
$B_{1}\subset\mathbb{R}^{2}$. Suppose
\begin{equation}
0\leq\theta(0)<(\pi/2-\arctan\Lambda)/4\label{C}.%
\end{equation} 
 If $\theta\in C^{\bar{\alpha}}(B_{1}%
)$, then there exists $0<\alpha<\bar{\alpha}$ and $C_{0}$ such that
\[
|D^{2}u(x)-D^{2}u(0)|\leq C_{0}(||u||_{L^{\infty}(B_{1})}, \Lambda,|\theta|_{C^{\alpha}(B_{1})})\ast|x|^{\alpha}.
\]

\end{lemma}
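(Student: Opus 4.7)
The plan is to reduce the problem, via a Yuan--type unitary rotation of the gradient graph, to a special Lagrangian equation with phase uniformly bounded away from zero, apply the Caffarelli--Cabr\'e interior $C^{2,\alpha}$ Schauder estimate in that supercritical regime, and then transfer the resulting bound back through an explicit change of coordinates. Fix an angle $\beta \in (0, \pi/2)$ to be chosen and let $U_\beta$ act on $\mathbb{R}^{4} = \mathbb{C}^{2}$ as multiplication by $e^{-i\beta}$. Applied to the Lagrangian graph $\{(x, Du(x))\}$, the map $U_\beta$ produces a new Lagrangian submanifold which, once the base map $x \mapsto \bar{x} = \cos\beta\, x + \sin\beta\, Du(x)$ is checked to be a diffeomorphism, is the gradient graph of a new potential $\bar{u}(\bar{x})$. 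Following \cite{Yuan2002}, the new eigenvalues satisfy $\arctan \bar{\lambda}_{i} = \arctan \lambda_{i} - \beta$, and the new phase is $\bar{\theta}(\bar{x}) = \theta(x(\bar{x})) - 2\beta$.

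The central step is the choice of $\beta$. Choosing $\beta$ comparable to $(\pi/2 - \arctan\Lambda)/2$ achieves three things at once: (i) $|\arctan \bar{\lambda}_{i}| \le \arctan\Lambda + \beta$ stays strictly inside $(-\pi/2, \pi/2)$ with margin depending only on $\Lambda$, giving a new Hessian bound $\bar{\Lambda} = \bar{\Lambda}(\Lambda)$; (ii) the base map is a diffeomorphism onto its image by the implicit function theorem, so $\bar{u}$ is well defined; and (iii) the hypothesis $\theta(0) < (\pi/2 - \arctan\Lambda)/4$ forces $\bar{\theta}(\bar{0}) \le -\delta$ for some $\delta = \delta(\Lambda) > 0$. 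Restricting to a small ball around $0$ on which the $C^{\bar\alpha}$ oscillation of $\theta$ stays below $\delta/2$, we obtain $\bar\theta \le -\delta/2$ throughout, which places $\bar u$ in the supercritical regime where $F$ is uniformly elliptic and concave on the relevant Hessian range.

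At this point $\bar u$ is a $C^{1,1}\cap W^{2,2}$ solution of $F(D^{2}\bar u) = \bar\theta(\bar x)$ with $|\bar\theta| \ge \delta/2$ and $\bar\theta \in C^{\bar\alpha}$. The Caffarelli--Cabr\'e interior Schauder estimate \cite{CC} yields, for some $0 < \alpha < \bar\alpha$,
\[
|D^{2}\bar u(\bar x) - D^{2}\bar u(\bar 0)| \le \bar C\, |\bar x|^{\alpha},
\]
with $\bar C$ controlled by $\|\bar u\|_{L^\infty}$, $\bar\Lambda$, and $|\bar\theta|_{C^{\alpha}}$; by the previous step these quantities are in turn controlled by $\|u\|_{L^\infty(B_1)}$, $\Lambda$, and $|\theta|_{C^{\alpha}(B_1)}$. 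Since the base map $\bar x(x)$ is bi-Lipschitz with constants depending on $\Lambda$ and $\beta$, and $D^{2}u$ is a smooth rational function of $D^{2}\bar u$ whose denominators are bounded below by the choice of $\beta$, the $C^{\alpha}$ estimate transfers to the required bound on $D^{2}u$ at the origin.

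The delicate part of the argument is the rotation choice: one must verify quantitatively that the explicit threshold $(\pi/2 - \arctan\Lambda)/4$ in the hypothesis is exactly what guarantees a single $\beta$ satisfying (i)--(iii) with constants depending only on $\Lambda$, and that the induced base map remains a diffeomorphism on a ball of radius depending only on $\Lambda$ and $|\theta|_{C^{\bar\alpha}}$. Once this tangent-sum bookkeeping is in place, the appeal to \cite{CC} and the change-of-coordinates computation are essentially mechanical.
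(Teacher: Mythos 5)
Your proposal is correct and follows essentially the same route as the paper: rotate the gradient graph by $\beta=\delta=(\pi/2-\arctan\Lambda)/2$ via Yuan's construction, use the hypothesis to force the rotated phase $\bar\theta=\theta-2\delta$ below $-\delta$ on a small ball, apply the Caffarelli--Cabr\'e Schauder estimate to the rotated potential, and transfer back using the bi-Lipschitz change of variables. The only parts left implicit are the explicit lower bound $1/L_2$ on the base-map Jacobian and the resolvent-type algebraic identity $D^2u(x)-D^2u(0)=[cI-sB]^{-1}(A-B)[cI-sA]^{-1}$ used to carry the H\"older seminorm back, both of which the paper works out in detail.
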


\begin{proof}
Consider the gradient graph~$\{(x,Du(x))|x\in B_{1}\}$ where $u$ has the
following Hessian bound
\[
-\Lambda I_{n}\leq D^{2}u\leq\Lambda I_{n}%
\]
~a.e. where it exists.

Define $\delta$ as
\[
\delta=(\pi/2-\arctan\Lambda)/2>0.
\]
Since by (\ref{C}) we have $0\leq\theta(0)<\delta/2$,
there exists $R^{\prime}(\delta,|\theta|_{C^{\bar{\alpha}}})>0$ such that
\[
|\theta(x)-\theta(0)|<\delta/2
\]
for all $x\in B_{R^{\prime}}\subseteq B_{1}$. This implies for every~$x$
in~$B_{R^{\prime}}$ for which $D^{2}u$ exists, we have
\[
\delta>\theta>\theta(0)-\delta/2.
\]
So now we rotate the gradient graph $\{(x ,D u (x))\vert x \in B_{R^{ \prime}%
}\}$ downward by an angle of $\delta$.

Let the new rotated co-ordinate system be denoted by $(\bar{x},\bar{y})$
where~
\begin{align}
\bar{x}  &  =\cos(\delta)x+\sin(\delta)Du(x)\label{R_0}\\
\bar{y}  &  =-\sin(\delta)x+\cos(\delta)Du(x). \label{R_1}%
\end{align}

On differentiating $\bar{x}$ (\ref{R_0}) with respect to $x$ we see that
\[
\frac{d\bar{x}}{dx}=\cos(\delta)I_{n}+\sin(\delta)D^{2}u(x)\leq\cos
(\delta)I_{n}+\Lambda\sin(\delta)I_{n}%
\]
Thus
\[
\cos(\delta)I_{n}-\Lambda\sin(\delta)I_{n}\leq\frac{d\bar{x}}{dx}\leq
\cos(\delta)I_{n}+\Lambda\sin(\delta)I_{n}.
\]
To obtain Lipschitz constants so that
\begin{equation}
\frac{1}{L_{2}}I_{n}\leq\frac{d\bar{x}}{dx}\leq L_{1}I_{n} \label{L}%
\end{equation}
let%

\begin{align*}
L_{1}  &  =\cos(\delta)+\Lambda\sin(\delta)\\
L_{2}  &  =\max\{%
\genfrac{\vert}{\vert}{}{}{1}{\cos(\delta)I_{n}+D^{2}u(x)\sin(\delta)}%
|x\in B_{R^{\prime}}\}.
\end{align*}
To find the value of $L_{2}$, we see that in $B_{R^{\prime}}$ we have the
following:\\
let  $\min\{\theta_{1},\theta_{2}\}\geq-A$  where $A=\arctan{\Lambda}$.
\begin{align*}
\cos(\delta)I_{n}+\sin(\delta)D^{2}u(x)  &  \geq\cos(\delta)-\sin(\delta)\tan(A)\\
&  =\cos(\delta)(1-\tan(\delta)\tan(A))\\
& =\cos(\delta)\frac{\tan(\delta)+\tan(A)}{\tan(\delta+A)}\\
& =\cos(\delta)\frac{\tan(\delta)+\tan(A)}{\tan(\frac{\pi/2-A}{2}+A)}\\
& =\cos(\delta)\frac{\tan(\delta)+\tan(A)}{\tan(\pi/2-\delta)}.
\end{align*}
This shows that 
\[
\frac{1}{L_{2}}=\cos(\delta)\frac{\tan(\delta)+\tan(A)}{\tan(\pi/2-\delta)}.
\]
Clearly $1/L_{2}$ is positive. 

Now, by \cite[Prop 4.1]{CW} we see that there exists a function~$\bar{u}$ such
that~
\[
\bar{y}=D_{\bar{x}}\bar{u}(\bar{x})
\]
where
\begin{equation}
\bar{u}(x)=u(x)+\sin\delta\cos\delta\frac{|Du(x)|^{2}-|x|^{2}}{2}-\sin
^{2}(\delta)Du(x)\cdot x \label{DU}%
\end{equation}
\newline defines $\bar{u}$ implicity in terms of $\bar{x}$ (since $\bar{x}$ is
invertible). Here $\bar{x}$ refers to the rotation map (\ref{R_0}).

Note that
\[
\bar{\theta}(\bar{x})-\bar{\theta}(\bar{y})=\theta(x)-\theta(y)
\]
which implies that $\bar{\theta}$ is also a $C^{\bar{\alpha}}$ function
\[
\frac{|\bar{\theta}(\bar{x}_{1})-\bar{\theta}(\bar{x}_{2})|}{|\bar{x}_{1}%
-\bar{x}_{2}|^{\alpha}}=\frac{|\theta(x_{1})-\theta(x_{2})|}{|x_{1}%
-x_{2}|^{\bar{\alpha}}}\ast\frac{|x_{1}-x_{2}|^{\bar{\alpha}}}{|\bar{x}%
_{1}-\bar{x}_{2}|^{\bar{\alpha}}}%
\]
\newline thus,%
\[
|\bar{\theta}|_{C^{\bar{\alpha}}(B_{r_{0}})}\leq L_{2}^{\bar{\alpha}}%
|\theta|_{C^{\bar{\alpha}}(B_{R^{\prime}})}.
\]

Let $\Omega=\bar{x}(B_{R^{ \prime}})$. Note that $B_{r_{0}}\subset\Omega$
where $r_{0} =R^{ \prime}/2L_{2}.$ So our new gradient graph is $\{(\bar{x}
,D_{\bar{x}} \bar{u} (\bar{x}))\vert\bar{x} \in\Omega\}$. The function
$\bar{u}$ satisfies the equation~
\[
F(D_{\bar{x}}^{2} \bar{u}) =\bar{\theta} (\bar{x})
\]
~in~$B_{r_{0}}$ where $\bar{\theta} \in
C^{\alpha} (B_{r_{0}})$. Observe that on $B_{r_{0}}$ we have
\begin{equation*}
\bar{\theta}=\theta-2\delta<\delta-2\delta=-\delta<0
\end{equation*} as $\theta<\delta$ on $B_{R^{\prime}}$.

\begin{claim}:
If $\bar{\vert \theta }\vert  >\delta $, then $F(D^{2}\bar{u}) =\bar{\theta }$ is a solution to a uniformly elliptic concave equation.  \label{CL}%
\end{claim}
\begin{proof}
The proof follows from \cite[lemma 2.2]{CPW16} and also from \cite[pg 24]{CW}.
 
\end{proof}

 Now using \cite[Corollary 1.3]{CC} we get interior Schauder estimates for $\bar{u}$:

\begin{equation}
|D^{2}\bar{u}(\bar{x})-D^{2}\bar{u}(0)|\leq C(||\bar{u}||_{L^{\infty}(B_{r_{0}/2})}+|\bar{\theta}|_{C^{\alpha}%
(B_{r_{0}/2})})\label{SC1}%
\end{equation}
for all $\bar{x}$ in $B_{r_{0}/2}$ where $C=C(\Lambda,\alpha)$.
This is our $C^{2,\alpha}$ estimate for $\bar{u}$.

Next, in order to show the same Schauder type inequality as (\ref{SC1}) for
$u$ in place of $\bar{u}$, we establish relations between the following pairs:

\begin{enumerate}
[label=(\roman*)]

\item oscillations of the Hessian of $D^{2}u$ and $D^{2}\bar{u}$

\item oscillations of $\theta$ and $\bar{\theta}$

\item the supremum norms of $u$ and $\bar{u}$ .
\end{enumerate}

We rotate back to our original gradient graph by rotating up by an angle
of $\delta$ and consider again the domain $B_{R^{\prime}}(0).$
This gives us the following
relations:
\begin{align}
x =\cos(\delta) \bar{x} -\sin(\delta) D_{\bar{x}} \bar{u} (\bar{x})\nonumber\\
y = \sin(\delta) \bar{x} +\cos(\delta) D_{\bar{x}} \bar{u} (\bar{x}).
\label{R}%
\end{align}
This gives us:
\begin{align*}
\frac{dx}{d \bar{x}} =\cos(\delta) I_{n} -\sin(\delta) D_{\bar{x}}^{2} \bar{u}
(\bar{x})\\
D_{\bar{x}} y =\sin(\delta) I_{n} +\cos(\delta) D_{\bar{x}}^{2} \bar{u}
(\bar{x}).
\end{align*}
\newline So we have
\[
D_{x}^{2}u (x) =D_{\bar{x}} y \frac{d\bar{x}}{dx}\newline=[\sin(\delta) I_{n}
+\cos(\delta) D_{\bar{x}}^{2} \bar{u}(\bar{x})][\cos(\delta) I_{n}
-\sin(\delta) D_{\bar{x}}^{2} \bar{u} (\bar{x})]^{-1}.
\]
The above expression is well defined everywhere because $D_{\bar{x}}^{2}%
\bar{u}(\bar{x}) <\cot(\delta) I_{n}$ for all $\bar{x} \in B_{r_{0}}$.
\newline

Note that we have $\cos(\delta)I_{n}-D_{\bar{x}}^{2}\bar{u}(\bar{x}%
)\sin(\delta)\geq\frac{1}{L_{1}}$, since
\[
\frac{dx}{d\bar{x}}=\cos(\delta)I_{n}-\sin(\delta)D_{\bar{x}}^{2}\bar{u}%
(\bar{x})=\left(  \frac{d\bar{x}}{dx}\right)  ^{-1}\geq\frac{1}{L_{1}}I_{n}%
\]
by (\ref{L}).

Next,
\begin{align}
D_{x}^{2}u(x)-D_{x}^{2}u(0)  &  =[\sin(\delta)I_{n}+\cos(\delta)D_{\bar{x}%
}^{2}\bar{u}(\bar{x})][\cos(\delta)I_{n}-\sin(\delta)D_{\bar{x}}^{2}\bar
{u}(\bar{x})]^{-1}\nonumber\\
&  -[\sin(\delta)I_{n}+\cos(\delta)D_{\bar{x}}^{2}\bar{u}(0)][\cos
(\delta)I_{n}-\sin(\delta)D_{\bar{x}}^{2}\bar{u}(0)]^{-1}. \label{Hessian}%
\end{align}

For simplification of notation we write
\begin{align*}
D_{\bar{x}}^{2}\bar{u}(\bar{x})  &  =A\\
D_{\bar{x}}^{2}\bar{u}(0)  &  =B\\
\cos(\delta)  &  =c,\sin(\delta)=s.
\end{align*}
~Noting that $[sI_{n}+cA]$ and $[cI_{n}-sA]^{-1}$ commute with each other we
can write (\ref{Hessian}) as the following equation
\begin{align*}
D_{x}^{2}u(x)-D_{x}^{2}u(0)  &  =\\
&  \lbrack cI_{n}-sB]^{-1}[cI_{n}-sB][sI_{n}+cA][cI_{n}-sA]^{-1}-\\
&  \lbrack cI_{n}-sB]^{-1}[sI_{n}+cB][cI_{n}-sA][cI_{n}-sA]^{-1}.
\end{align*}
~\ Again we see that
\[
\lbrack cI_{n}-sB][sI_{n}+cA]-[sI_{n}+cB][cI_{n}-sA]=A-B.
\]
This means
\[
D_{x}^{2}u(x)-D_{x}^{2}u(0)=[cI_{n}-sB]^{-1}[A-B][cI_{n}-sA]^{-1}.
\]
We have already shown that
\[
|cI_{n}-sA|\geq\frac{1}{L_{1}}%
\]
\newline which implies
\[
|cI_{n}-sA|^{-1}\leq L_{1}.
\]
~\ Thus we get~%
\begin{align}
|D_{x}^{2}u(x)-D_{x}^{2}u(0)|  &  \leq L_{1}^{2}|D_{\bar{x}}^{2}\bar{u}%
(\bar{x})-D_{\bar{x}}^{2}\bar{u}(0)|.\nonumber\\
&  \leq CL_{1}^{2}(||\bar{u}||_{L^{\infty
}(B_{r_{0}/2})}+|\bar{\theta}|_{C^{\alpha}(B_{r_{0}/2})})|\bar{x}|^{\alpha}\nonumber\\
&  \leq CL_{1}^{2+\alpha}(||\bar{u}||_{L^{\infty}(B_{r_{0}/2%
})}+|\bar{\theta}|_{C^{\alpha}(B_{r_{0}/2})}|x|^{\alpha}\label{bigalpha}%
\end{align}
where $L_{1}$ is the Lipschitz constant of the co-ordinate change map. This
implies
\begin{equation}
\frac{1}{L_{1}^{\alpha+2}}|D_{x}^{2}u(x)|_{C^{\alpha}(B_{R})}\leq|D_{\bar{x}%
}^{2}u(\bar{x})|_{C^{\alpha}(B_{r_{0}/2})}. \label{LH}%
\end{equation}

Recall from (\ref{DU}) that
\[
\bar{u}(x)=u(x)+g(x).
\]
This shows
\begin{align}
||\bar{u}(\bar{x})||_{L^{\infty}(B_{r_{0}/2})}=||\bar{u}(x)||_{L^{\infty}%
(\bar{x}^{-1}(B_{r_{0}/2}))}\leq||\bar{u}(x)||_{L^{\infty}(B_{R^{\prime}})} \nonumber \\
\leq||u(x)||_{L^{\infty}(B_{R^{\prime}})}+||g||_{L^{\infty}(B_{R^{\prime}})}.%
\label{U}%
\end{align}
Note that~
\begin{equation}
||g||_{L^{\infty}(B_{R})}\leq R||Du||_{L^{\infty}(B_{R})}+\frac{1}{2}%
[R^{2}+||Du||_{L^{\infty}(B_{R})}^{2}] \label{ggg}%
\end{equation}

and combining (\ref{LH}), (\ref{U}), (\ref{ggg}) with (\ref{bigalpha}) we get
\begin{align*}
&  |D_{x}^{2}u(x)-D_{x}^{2}u(0)|\\
&  \leq CL_{1}^{\alpha+2}\left\{
\begin{array}
[c]{c}%
||u||_{L^{\infty}(B_{R^{\prime}})}\\
R||Du||_{L^{\infty}(B_{R})}+\frac{1}{2}[R^{2}+||Du||_{L^{\infty}(B_{R})}%
^{2}]\\
+L_{2}^{\alpha}r_{0}|\theta|_{C^{\alpha}(B_{R^{\prime}})}%
\end{array}
\right\}  \left\vert x\right\vert ^{\alpha}.
\end{align*}
This proves the Lemma. \ 
\end{proof}

\begin{proof}
[Proof of Theorem 1.2]First note that the lemma gives a H\"{o}lder norm on any
interior ball, by a rescaling of the form
\[
u_{\rho}(x)=\frac{u(\rho x)}{\rho^{2}}%
\]
for values of $\rho>0$ and translation of any point to the origin. \ Consider
the gradient graph~$\{(x,Du(x))|x\in B_{1}\}$ where $u$ satisfies
\[
F(D^{2}u)=\theta
\]
on $B_{1}$ and $\theta\in C^{\bar{\alpha}}(B_{1})\ \text{.}\ $ Then there
exists a ball of radius $r$ inside $B_{1}$ on which $osc\theta<\delta/4$ where
$\delta$ is as defined in Lemma 2.1. \newline Now this means that either we
have $\theta(x)<\delta/2$ in which case, by the above lemma we see that $u\in
C^{2,\alpha}(B_{r})$ satisfying the given estimates; or we have $\theta
(x)>\delta/4$ in which case $u\in C^{2,\alpha}(B_{r})$ with uniform estimates,
by claim (\ref{CL}) and \cite[Corollary 1.3]{CC}\newline.
\end{proof}

\begin{proof}
[Proof of Theorem 1.1]Since $u\in C^{1,1}(B_{1})\cap W^{2,2}(B_{1})$ satisfies
the uniformly elliptic equation
\[
\Delta_{g}\theta=0,
\]
by the De Giorgi-Nash Theorem we have that $\theta\in C^{\alpha}%
(B_{1/2}).$ This means that u satisfies
\[
F(D^{2}u)=\theta.
\]
By Theorem 1.2 we see that $u\in C^{2,\alpha}(B_{r})$ where $r<1/2$.
\ Smoothness follows by \cite[Corollary 5.1]{CW}. \ 
\end{proof}

\bibliographystyle{amsalpha}
\bibliography{Hamiltonian_Stationary}

\providecommand{\bysame}{\leavevmode\hbox to3em{\hrulefill}\thinspace}
\providecommand{\MR}{\relax\ifhmode\unskip\space\fi MR }
\providecommand{\MRhref}[2]{%
  \href{http://www.ams.org/mathscinet-getitem?mr=#1}{#2}
}
\providecommand{\href}[2]{#2}
\begin{thebibliography}{CPW17}

\bibitem[CC01]{CC}
Xavier Cabr\'e and Luis~A. Caffarelli, \emph{Regularity theory for a class of
  nonconvex fully nonlinear elliptic equations}, 57--62. \MR{1873641}

\bibitem[CPW17]{CPW16}
Tristan~C. Collins, Sebastien Picard, and Xuan Wu, \emph{Concavity of the
  lagrangian phase operator and applications}, Calculus of Variations and
  Partial Differential Equations \textbf{56} (2017), no.~4, 89.

\bibitem[CW16]{CW}
Jingyi Chen and Micah Warren, \emph{On the regularity of hamiltonian stationary
  lagrangian manifolds}, arXiv preprint arXiv:1611.02641 (2016).

\bibitem[WY09]{WarrenYuan2}
Micah Warren and Yu~Yuan, \emph{Explicit gradient estimates for minimal
  {L}agrangian surfaces of dimension two}, Math. Z. \textbf{262} (2009), no.~4,
  867--879. \MR{2511754}

\bibitem[WY13]{WangYuan}
Dake Wang and Yu~Yuan, \emph{Singular solutions to special {L}agrangian
  equations with subcritical phases and minimal surface systems}, Amer. J.
  Math. \textbf{135} (2013), no.~5, 1157--1177. \MR{3117304}

\bibitem[Yua02]{Yuan2002}
Yu~Yuan, \emph{A {B}ernstein problem for special {L}agrangian equations},
  Invent. Math. \textbf{150} (2002), no.~1, 117--125. \MR{1930884}

\end{thebibliography}

\end{document}